\theoremstyle{plain}
\newtheorem{proposition}{Proposition}[section]
\newtheorem{theorem}[proposition]{Theorem}
\newtheorem{corollary}[proposition]{Corollary}
\newtheorem{lemma}[proposition]{Lemma}
\def\msig{\mc{F}^\sigma_m}
\def\RF{\mathbb{R}\msig}
\def\RFP{\mathbb{R}\mc{F}_m^{\sigma+}}
\def\RFM{\mathbb{R}\mc{F}_m^{\sigma-}}
\def\mc{\mathcal}
\def\prog{\textsf{DensityBounder }}
\def\F{\mathcal{F}}
\def\tr{\textrm}
\def\cH{\char`\#}
\def\cD{\char`\_}
\begin{document}

\title{Hypergraphs do jump}

\author{Rahil Baber\thanks{Department of Mathematics, UCL, London, WC1E 6BT, UK. Email: rahilbaber@hotmail.com.}
\and John Talbot\thanks{Department of Mathematics, UCL, London, WC1E
6BT, UK. Email: talbot@math.ucl.ac.uk.  This author is a Royal
Society University Research Fellow.}}

\date\today

\maketitle

\begin{abstract}

We  say that
$\alpha\in [0,1)$ is a \emph{jump} for an integer $r\geq 2$ if there
exists $c(\alpha)>0$ such that for all $\epsilon >0 $ and all $t\geq
1$ 
any $r$-graph
with $n\geq n_0(\alpha,\epsilon,t)$ vertices and density at least
$\alpha+\epsilon$ 
contains a subgraph on $t$
vertices of density at least $\alpha+c$.

The Erd\H os--Stone--Simonovits theorem \cite{ES2}, \cite{ES1}
implies that for $r=2$ every $\alpha\in [0,1)$ is a jump. Erd\H os
\cite{E} showed that for all $r\geq 3$, every $\alpha\in [0,r!/r^r)$
is a jump. Moreover he made his famous ``jumping constant
conjecture'' that for all $r\geq 3$, every $\alpha \in [0,1)$ is a
jump. Frankl and R\"odl \cite{FR} disproved this conjecture by
giving a sequence of values of non-jumps for all $r\geq 3$. 

We use Razborov's flag algebra method \cite{R4} to show that jumps exist for $r=3$ in the interval $[2/9,1)$. These are the first examples of jumps for any $r\geq 3$ in the
interval $[r!/r^r,1)$. To be precise we show that for $r=3$ every
$\alpha \in [0.2299,0.2316)$ is a jump.

We also give an improved upper bound for the Tur\'an density of $K_4^-=\{123,124,134\}$: $\pi(K_4^-)\leq 0.2871$. This in turn implies that for $r=3$ every $\alpha \in [0.2871,8/27)$ is a jump.
\end{abstract}

\section{Introduction}
An \emph{$r$-uniform hypergraph} (or \emph{$r$-graph} for short) is a pair
$F=(V(F),E(F))$ where $V(F)$ is a set of \emph{vertices} and $E(F)$
is a family of $r$-subsets of $V(F)$ called \emph{edges}. So a $2$-graph is
a simple graph. For ease of notation we often identify an
$r$-graph $F$ with its edge set. The \emph{density} of an $r$-graph $F$ is \[
d(F)=\frac{|E(F)|}{\binom{n}{r}}.\]

We say that $\alpha\in [0,1)$ is a \emph{jump} for an integer $r\geq
2$ if there exists $c(\alpha)>0$ such that for all $\epsilon >0 $
and all $t\geq 1$ there exists $n_0(\alpha,\epsilon,t)$ such that
any $r$-graph with $n\geq n_0(\alpha,\epsilon,t)$ vertices and at
least $(\alpha+\epsilon)\binom{n}{r}$ edges contains a subgraph on
$t$ vertices with at least $(\alpha+c)\binom{t}{r}$ edges.

The Erd\H os--Stone--Simonovits theorem \cite{ES2}, \cite{ES1}
implies that for $r=2$ every $\alpha\in [0,1)$ is a jump. Erd\H os
\cite{E} showed that for all $r\geq 3$, every $\alpha\in [0,r!/r^r)$
is a jump. He went on to make his famous ``jumping constant
conjecture'' that for all $r\geq 3$, every $\alpha \in [0,1)$ is a
jump. Frankl and R\"odl \cite{FR} disproved this conjecture by
giving a sequence of values of non-jumps for all $r\geq 3$. More
recently a number of authors have given more examples of non-jumps
for each $r \geq 3$ in the interval $[5r!/2r^r,1)$ (see \cite{FPRT} for example). However nothing
was previously known regarding the location of jumps or non-jumps in
the interval $[r!/r^r,5r!/2r^r)$ for any $r\geq 3$.

We give the first examples of jumps for any $r\geq 3$ in the
interval $[r!/r^r,1)$.
\begin{theorem}\label{main:thm}
If $\alpha\in [0.2299,0.2316)$ then $\alpha$ is a jump for $r=3$.
\end{theorem}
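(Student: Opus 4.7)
The plan is to apply the Frankl--R\"odl criterion \cite{FR}, which reduces proving that $\alpha$ is a jump to constructing a finite family $\mathcal{F}$ of $3$-graphs satisfying
\[
\pi(\mathcal{F})\leq\alpha \qquad\text{and}\qquad 6\lambda(F)>\alpha\ \text{for every } F\in\mathcal{F},
\]
where $\lambda(F)$ denotes the graph Lagrangian (so $6\lambda(F)$ is the limit density of the Lagrangian-optimal weighted blow-up of $F$). To cover the whole interval uniformly, the plan is to exhibit a single family $\mathcal{F}$ with $\pi(\mathcal{F})\leq 0.2299$ and $\min_{F\in\mathcal{F}}6\lambda(F)>0.2316$; then for any $\alpha\in[0.2299,0.2316)$ one may set $c=\min_{F\in\mathcal{F}}6\lambda(F)-\alpha>0$ and Theorem~\ref{main:thm} follows.

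The reduction itself is the usual supersaturation and blow-up argument. Given an $n$-vertex $3$-graph $H$ of density $\geq\alpha+\varepsilon$, since $\pi(\mathcal{F})\leq\alpha<\alpha+\varepsilon$ the graph $H$ must contain some $F\in\mathcal{F}$ once $n\geq n_0$; supersaturation then provides $\Omega(n^{|V(F)|})$ copies of $F$, and Erd\H os's hypergraph blow-up theorem extracts a weighted blow-up $F[s_1,\dots,s_k]$ with part sizes proportional to a Lagrangian-optimal weighting $x^\ast$ of $V(F)$. Passing to a uniformly random $t$-vertex sub-subset if needed (which preserves the expected density), the resulting $t$-vertex subgraph has density tending to $6\lambda(F)\geq\alpha+c$ as $t\to\infty$, as required.

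The substantive work, and the principal obstacle, is the construction of $\mathcal{F}$. Certifying $6\lambda(F)>0.2316$ is routine: for each candidate $F$ I would exhibit an explicit weighting $x$ on $V(F)$ and evaluate the cubic form $6\sum_{\{i,j,k\}\in E(F)}x_ix_jx_k$. The hard bound is $\pi(\mathcal{F})\leq 0.2299$, and for this I would deploy Razborov's flag-algebra method \cite{R4}. Concretely, the plan is to choose a finite set of types $\sigma$ and $\sigma$-flags of modest order, encode the bound as feasibility of a semidefinite programme --- positive-semidefinite matrices $Q_\sigma$ whose flag-algebra Cauchy--Schwarz contributions, combined with forbidden-subgraph terms for $F\in\mathcal{F}$, dominate the excess edge density $d(G)-0.2299$ --- solve the SDP numerically, and then round the output to an exact rational positive-semidefinite certificate whose correctness can be verified rigorously. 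The real difficulty is that the two requirements on $\mathcal{F}$ pull in opposite directions: enlarging $\mathcal{F}$ eases the flag-algebra SDP for $\pi(\mathcal{F})\leq 0.2299$ but threatens to force inclusion of members with Lagrangian below $0.2316/6$, so the main art is in selecting $\mathcal{F}$ that supports both bounds at a tractable flag order.
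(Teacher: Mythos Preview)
Your proposal is correct and follows essentially the same route as the paper: apply the Frankl--R\"odl criterion to a single finite family, certify the Lagrangian lower bound by exhibiting explicit weightings, and certify the Tur\'an-density upper bound via a flag-algebra SDP. Two small remarks: the paper's Lagrangian $\lambda(F)$ already absorbs the factor $r!=6$, so where you write $6\lambda(F)$ the paper simply writes $\lambda(F)$; and the paper carries out the ``main art'' you describe by taking the explicit family $\mathcal{F}'=\{F_1,\ldots,F_5\}$ (with $\min_i\lambda(F_i)=\lambda(F_2)=(189+15\sqrt{5})/961\approx 0.2316$) and running the flag-algebra computation at $l=7$ with six specified types to obtain $\pi(\mathcal{F}')\leq 0.2299$.
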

In order to explain our proof we require some definitions and a
theorem of Frankl and R\"odl \cite{FR}.

Let $F$ be an $r$-graph with vertex set $[n]=\{1,2,\ldots,n\}$ and
edge set $E(F)$. Define \[ S_n=\{(x_1,\ldots,x_n)\in \mathbb{R}^n:
\sum_{i=1}^n x_i=1,x_i\geq 0\}.\] For $x\in S_n$ let
\[
\lambda(F,x)=\sum_{\{i_1,i_2,\ldots,i_r\}\in E(F)}r!x_{i_1}x_{i_2}\cdots x_{i_r}.
\]
The \emph{Lagrangian} of $F$ is defined to be
\[
\lambda(F)=\max_{x\in S_n}\lambda(F,x).
\]

Given a family of $r$-graphs $\F$ we say that an $r$-graph $H$ is
\emph{$\F$-free} if $H$ does not contain a subgraph isomorphic to
any member of $\F$. For any integer $n\geq 1$ we define the
\emph{Tur\'an number} of $\F$ to be
\[
\tr{ex}(n,\F)=\max\{|E(H)|:H \tr{ is $\F$-free},\ |V(H)|=n\}.
\]
The \emph{Tur\'an density} of $\F$ is defined to be the following
limit (a simple averaging argument shows that it always exists)
\[
\pi(\F)=\lim_{n\to\infty} \frac{\tr{ex}(n,\F)}{\binom{n}{r}}.\]
We say that $\alpha$ is \emph{threshold} for $\F$ if $\pi(\F)\leq \alpha$.
\begin{theorem}[Frankl and R\"odl \cite{FR}]\label{FR:thm}
  The following are equivalent:
\begin{itemize}
\item[(i)] $\alpha$ is a jump for $r$.
\item[(ii)] $\alpha$ is threshold for a finite family $\F$ of $r$-graphs satisfying \[\min_{F\in
\F}\lambda(F)>\alpha.\]
\end{itemize}
\end{theorem}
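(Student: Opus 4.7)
The plan rests on the fact that for any $r$-graph $F$ the Lagrangian $\lambda(F)$ equals the supremum of the edge-densities of large weighted blow-ups of $F$. Explicitly, if $x^*\in S_{|V(F)|}$ achieves $\lambda(F)$ and we replace each vertex $v_i$ by a class of size $\lfloor Tx_i^*\rfloor$, we obtain an $r$-graph on $\approx T$ vertices whose density tends to $\lambda(F)$ as $T\to\infty$. This single observation drives both implications.

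For (ii) $\Rightarrow$ (i), set $\beta=\min_{F\in\F}\lambda(F)$ and $c=(\beta-\alpha)/2>0$. Given $\epsilon>0$ and $t\ge 1$, the hypothesis $\pi(\F)\le\alpha<\alpha+\epsilon$, combined with the standard supersaturation lemma and the blow-up invariance $\pi(F)=\pi(F(m))$, implies that for $n$ sufficiently large any $H$ of density at least $\alpha+\epsilon$ contains an $m$-blow-up $F(m)$ of some $F\in\F$ with $m$ as large as we please. Take $m$ large enough that the weighted sub-blow-up of $F(m)$ built from an optimal $x^*$ for $F$ lives on some $T\ge t$ vertices and has density at least $\lambda(F)-c/2\ge\alpha+3c/2$. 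Averaging over $t$-subsets of this $T$-vertex graph (the expected density of a random $t$-subset equals the overall density) produces a $t$-subset of density at least $\alpha+c$, which is what the jump property demands.

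For (i) $\Rightarrow$ (ii), let $c=c(\alpha)>0$ be the jump constant and choose $t$ large enough that $\prod_{i=0}^{r-1}(1-i/t)>(\alpha+c/2)/(\alpha+c)$; let $\F$ be the (finite) family of all $r$-graphs on $t$ vertices of density at least $\alpha+c$. Plugging the uniform vector $x=(1/t,\ldots,1/t)$ into $\lambda(F,x)$ gives $\lambda(F)\ge d(F)\prod_{i=0}^{r-1}(1-i/t)>\alpha+c/2$, so $\min_{F\in\F}\lambda(F)>\alpha$. For any $\epsilon>0$, the jump property applied with this $t$ forces every sufficiently large $H$ of density $\ge\alpha+\epsilon$ to contain a $t$-subset of density $\ge\alpha+c$, hence to contain some $F\in\F$; thus $\mathrm{ex}(n,\F)\le(\alpha+\epsilon)\binom{n}{r}$ for all large $n$, and letting $\epsilon\downarrow 0$ yields $\pi(\F)\le\alpha$.

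The main obstacle is the supersaturation-plus-blow-up input in (ii) $\Rightarrow$ (i): upgrading ``$H$ contains some $F\in\F$'' to ``$H$ contains $F(m)$ for $m$ as large as we wish''. This follows from Erd\H os--Simonovits supersaturation together with the fact that Tur\'an density is invariant under replacing each member of a family by its blow-ups, but it is the one step in the argument that requires genuine hypergraph Tur\'an theory rather than purely formal manipulations with Lagrangians.
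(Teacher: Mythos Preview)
The paper does not prove Theorem~\ref{FR:thm}; it is quoted from Frankl and R\"odl \cite{FR} and used as a black box throughout. There is therefore no ``paper's own proof'' to compare your attempt against.

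That said, your argument is the standard one and is correct in outline. In (i)~$\Rightarrow$~(ii) the computation $\lambda(F)\ge d(F)\prod_{i=0}^{r-1}(1-i/t)$ and the choice of $\F$ as all $t$-vertex $r$-graphs of density at least $\alpha+c$ are exactly right, and the threshold bound follows cleanly. In (ii)~$\Rightarrow$~(i) you correctly identify the only nontrivial ingredient: one needs not merely a copy of some $F\in\F$ but an arbitrarily large blow-up $F(m)$, and this is supplied by the fact that $\pi(\F)=\pi(\{F(m):F\in\F\})$ (equivalently, supersaturation plus the Erd\H os argument that positive density of copies forces a blow-up). Once $F(m)$ is in hand, extracting a weighted sub-blow-up of density close to $\lambda(F)$ and then averaging over $t$-subsets is routine. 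Your proposal would serve as a self-contained proof of the cited theorem.
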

Let $F_r$ be the $r$-graph consisting of a single edge. Since any
$\alpha\in [0,1)$ is threshold for $F_r$ and $\lambda(F_r)=r!/r^r$,
Theorem \ref{FR:thm} trivially implies Erd\H os's result \cite{E}
that for each $r\geq 3$, every $\alpha\in [0,r!/r^r)$ is a jump for $r$.

The original version of Erd\H os's jumping constant conjecture
asserted that $r!/r^r$ is a jump for every $r\geq 3$. This fascinating
problem is still open, even for $r=3$. Erd\H os speculated \cite{E}
that $3!/3^3=2/9$ was threshold for the following family of 3-graphs
$\F^*=\{F_1,F_2,F_3\}$, where
\[
F_1 = \{123, 124, 134\}, F_2 = \{123, 124, 125, 345\}, F_3 = \{123,
124, 235, 145, 345\}.\] It is straightforward to check that
$\lambda(F_1)=8/27,$ $\lambda(F_2)=\frac{189+15\sqrt{5}}{961}$ and
$\lambda(F_3)=6/25$. Since $\min_{1\leq i\leq
3}\lambda(F_i)=\lambda(F_2)>2/9$, if $2/9$ were threshold for $\F^*$
then Theorem \ref{FR:thm} would imply $2/9$ is a jump for $r=3$.

Unfortunately Erd\H os's suggestion is incorrect: $2/9$ is not
threshold for $\F^*$. There exist 7 vertex 3-graphs that are
$\F^*$-free with Lagrangians greater than $2/9$. By taking
appropriate ``blow-ups'' of such 3-graphs we find that
$\pi(\F^*)>2/9$. (To be precise we could take blow-ups of $F_4$,
defined below, to show that $\pi(\F^*)\geq  0.2319$.) However Erd\H
os's idea suggests a natural approach to proving that $2/9$ is a
jump for $r=3$. Let $\F'$ be a family of $3$-graphs containing
$F_1,F_2,F_3$ with the property that $\min_{F\in\F'}\lambda(F)>2/9$.
If we can show that $2/9$ is threshold for $\F'$ then (by Theorem
\ref{FR:thm}) $2/9$ is a jump for $r=3$.

A search of all $3$-graphs with at most 7 vertices  yields the
following two additional 3-graphs which we can add to $\F'$
\[
F_4 = \{123, 135, 145, 245, 126, 246, 346, 356, 237, 147, 347, 257,167\},
\]
\[
F_5 = \{123, 124, 135, 145, 236, 346, 256, 456, 247, 347, 257,
357,167\}.
\]
It is easy to check that $\lambda(F_4)\geq 0.2319>\lambda(F_2)$ (to
see this set $x_1=x_2=x_3=0.164, x_4=0.154, x_5=x_6=x_7=0.118$) and
$\lambda(F_5)\geq \lambda(F_2)$ (set $\mu=\frac{18-3\sqrt{5}}{31}$,
$x_1=x_6=x_7=\mu/3, x_2=x_3=x_4=x_5=(1-\mu)/4$).

We can now ask: is it true that $2/9$ is threshold for
$\F'=\{F_1,F_2,F_3,F_4,F_5\}$? Unfortunately this is still false,
there exist 3-graphs on 8 vertices avoiding all members of $\F'$ and
with Lagrangians greater than $2/9$. By taking appropriate
``blow-ups'' of such 3-graphs we can show that $\pi(\F')>2/9$.
Moreover, by considering 8 vertex 3-graphs, numerical evidence
suggests that if $2/9$ is a jump then the size of the jump is
extremely small: $c(2/9)\leq 0.00009254$.

However, although $2/9$ is not threshold for $\F'$ we can
show the following upper bound on the Tur\'an density of $\F'$.
\begin{lemma}\label{main:lem}
The Tur\'an density of $\F'$ satisfies $\pi(\F')\leq
0.2299$.\end{lemma}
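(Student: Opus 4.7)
The plan is to apply Razborov's flag algebra method \cite{R4} to the class of $\F'$-free 3-graphs. Fix a modest integer $N$ (in practice $N=7$ or $N=8$) and let $\mathcal{H}_N$ be the list, up to isomorphism, of all $\F'$-free 3-graphs on $N$ vertices. For any $\F'$-free 3-graph $H$ on $n \gg N$ vertices one has
\[
 d(H) = \sum_{G \in \mathcal{H}_N} d(G)\, p(G,H) + o(1),
\]
where $p(G,H)$ is the probability that a uniformly random $N$-subset of $V(H)$ induces (an isomorphic copy of) $G$. Hence $\pi(\F')$ is at most the supremum of $\sum_{G} d(G)\, x_G$ over profiles $(x_G)_{G \in \mathcal{H}_N} \in S_{|\mathcal{H}_N|}$ that are asymptotically realisable by $\F'$-free 3-graphs.

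To bound this supremum I would exploit the quadratic inequalities supplied by the flag algebra. For each \emph{type} $\sigma$ (a small labelled $\F'$-free 3-graph) I would enumerate the associated flags, i.e. partially labelled $\F'$-free 3-graphs of order roughly $(N+|\sigma|)/2$ extending $\sigma$. For any positive semidefinite matrix $M_\sigma$ indexed by these flags, the flag-algebraic Cauchy--Schwarz identity yields a quadratic form in the flag densities whose averaged expansion is a linear combination $\sum_{G} q_G(M_\sigma)\, p(G,H) \geq -o(1)$. Subtracting such certificates from $\sum_G d(G)\, p(G,H)$ and minimising the worst-case coefficient over $G$ turns the problem into a semidefinite programme: minimise $t$ subject to $M_\sigma \succeq 0$ for every type $\sigma$ and $d(G) - \sum_\sigma q_G(M_\sigma) \leq t$ for every $G \in \mathcal{H}_N$. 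Any feasible solution yields $\pi(\F') \leq t$, and the goal is to attain $t \leq 0.2299$.

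I would run this programme through \prog, which performs the enumeration, sets up the SDP and calls a numerical interior point solver, producing candidate matrices $M_\sigma^*$ with objective just below $0.2299$. The main obstacle --- and the reason \prog\ is needed rather than an off-the-shelf SDP solver --- is converting this floating-point optimum into a rigorous certificate: the $M_\sigma^*$ are only approximately positive semidefinite and only approximately satisfy the linear constraints. The standard remedy is to round $M_\sigma^*$ to nearby rational matrices, to verify positive semidefiniteness exactly via an $LDL^\top$ decomposition in rational arithmetic, and to absorb the rounding slack by relaxing the target from the true numerical optimum up to $0.2299$. A secondary but non-trivial difficulty is computational scale: the number of $\F'$-free $N$-vertex 3-graphs and the flag matrix dimensions grow rapidly with $N$, so the choice of $N$, the set of types, and the truncation of flag sizes must be carefully balanced to keep the SDP tractable while still certifying the desired bound.
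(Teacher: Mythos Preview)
Your proposal is correct and follows essentially the same approach as the paper: apply Razborov's flag algebra method with $l=7$ (the paper's choice), enumerate the $\F'$-free $7$-vertex $3$-graphs (there are $4042$), choose a collection of types and flag sizes, solve the resulting semidefinite programme via \prog, and then certify the bound rigorously by replacing the floating-point matrices with nearby rational positive semidefinite matrices so that the final verification uses only exact arithmetic. The paper additionally records the six specific types used and notes that the ideal choice of all types of size at most $5$ is computationally intractable, but otherwise your outline matches the paper's proof.
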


Since $0.2299<\min_{F\in \F'}\lambda(F)=\lambda(F_2)=0.2316$,
Theorem \ref{main:thm} is an immediate corollary of Lemma
\ref{main:lem} and Theorem \ref{FR:thm}.

It remains to prove Lemma \ref{main:lem}. For this we make use of
recent work of Razborov \cite{R4} on flag algebras that introduces a
new technique that drastically improves our ability to compute (and
approximate) Tur\'an densities. We outline the necessary background
in the next section but emphasize that the reader should consult
Razborov \cite{RF} and \cite{R4} for a full description of his work.

\section{Computing Tur\'an densities via flag algebras}
\subsection{Razborov's method}
Let $\F$ be a family of $r$-graphs whose Tur\'an density we wish to
compute (or at least approximate). Razborov \cite{R4}, describes a
method for attacking this problem that can be thought of as a
general application of Cauchy--Schwarz using the information given
by small $\F$-free $r$-graphs.

Let $\mc{H}$ be the family of all $\F$-free $r$-graphs of order $l$,
up to isomorphism. If $l$ is sufficiently small we can explicitly
determine $\mc{H}$ (by computer search if necessary).

For $H\in \mc{H}$ and a large $\F$-free $r$-graph $G$, we define
$p(H;G)$ to be the probability that a random $l$-set from $V(G)$
induces a subgraph isomorphic to $H$. Trivially, the density of $G$
is equal to the probability that a random $r$-set from $V(G)$ forms
an edge in $G$. Thus, averaging over $l$-sets in $V(G)$, we can
express the density of $G$ as
\begin{align}\label{EQ:DensityOfG}
d(G) = \sum_{H\in \mc{H}}d(H)p(H;G),
\end{align}
and hence $d(G)\leq \max_{H\in\mc{H}}d(H)$.

This ``averaging'' bound on $d(G)$ is in general rather poor:
clearly it could only be sharp if all subgraphs of $G$ of order $l$
are as dense as possible. It also fails to consider how different
subgraphs of $G$ can overlap. Razborov's flag algebras method allows
us to make use of the information given by examining overlapping
subgraphs of $G$ to give far stronger bounds.

A \emph{flag}, $F=(G_F,\theta)$, is an $r$-graph $G_F$ together with an
injective map $\theta: [s]\to V(G_F)$. If $\theta$ is bijective (and
so $|V(G_F)|=s$) we call the flag a \emph{type}. For ease of notation
given a flag $F=(G_F,\theta)$ we define its order $|F|$ to be
$|V(G_F)|$.

Given a type $\sigma$ we call a flag $F=(G_F,\theta)$ a
$\sigma$-\emph{flag} if the induced labelled subgraph of $G_F$ given
by $\theta$ is $\sigma$. A flag $F=(G_F,\theta)$ is \emph{admissible}
if $G_F$ is $\F$-free.

Fix a type $\sigma$ and an integer $m\leq (l+|\sigma|)/2$. (The
bound on $m$ ensures that an $l$-vertex $r$-graph can contain two
$m$-vertex subgraphs overlapping in $|\sigma|$ vertices.) Let
$\msig$ be the set of all admissible $\sigma$-flags of order $m$, up
to isomorphism. Let $\Theta$ be the set of all injective functions
from $[|\sigma|]$ to $V(G)$. Given $F\in\msig$ and $\theta\in\Theta$
we define $p(F,\theta;G)$ to be the probability that an $m$-set $V'$
chosen uniformly at random from $V(G)$ subject to
$\text{im}(\theta)\subseteq V'$, induces a $\sigma$-flag
$(G[V'],\theta)$ that is isomorphic to $F$.

If $F_a,F_b\in \F_m^\sigma$ and $\theta\in \Theta$ then
$p(F_a,\theta;G)p(F_b,\theta;G)$ is the probability that two
$m$-sets $V_a,V_b\subseteq V(G)$, chosen independently at random
subject to $\text{im}(\theta)\subseteq V_a\cap V_b$, induce
$\sigma$-flags $(G[V_a],\theta)$, $(G[V_b],\theta)$ that are
isomorphic to $F_a, F_b$ respectively. We define a related
probability, $p(F_a,F_b,\theta ; G)$, to be the probability that if
we choose a random $m$-set $V_a\subseteq V(G)$, subject to
$\tr{im}(\theta)\subseteq V_a$ and then choose a random $m$-set
$V_b\subseteq V(G)$ such that $V_a\cap V_b=\tr{im}(\theta)$ then
$(G[V_a],\theta)$, $(G[V_b],\theta)$ are isomorphic to $F_a, F_b$
respectively. Note that the difference between
$p(F_a,\theta;G)p(F_b,\theta;G)$ and $p(F_a,F_b,\theta ; G)$ is due
to the effect of sampling \emph{with} or \emph{without} replacement.
When $G$ is large this difference will be negligible, as the
following lemma tells us. (This is a very special case of Lemma 2.3
in \cite{RF}.)
\begin{lemma}[Razborov \cite{RF}]\label{o(1):eq}
For any $F_a,F_b\in\msig$, and $\theta\in\Theta$,
\[
p(F_a,\theta;G)p(F_b,\theta;G) = p(F_a,F_b,\theta;G)+o(1),
\]
where the $o(1)$ term tends to $0$ as $|V(G)|$ tends to infinity.
\end{lemma}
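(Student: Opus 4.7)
The plan is to unpack the two sampling procedures and show they differ only on a vanishingly small event. Let $n=|V(G)|$ and $s=|\sigma|$. Both $p(F_a,\theta;G)p(F_b,\theta;G)$ and $p(F_a,F_b,\theta;G)$ describe joint distributions of a pair of $m$-sets $V_a,V_b\subseteq V(G)$ each containing $\text{im}(\theta)$: in the first, $V_a$ and $V_b$ are chosen independently and uniformly at random among $m$-sets containing $\text{im}(\theta)$, while in the second they are constrained to satisfy $V_a\cap V_b=\text{im}(\theta)$. Let $A$ denote the event $V_a\cap V_b=\text{im}(\theta)$ under the independent model.

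First I would estimate $P(A)$. Fixing $V_a$ with $|V_a\setminus\text{im}(\theta)|=m-s$, the conditional probability that $V_b\setminus\text{im}(\theta)$ avoids $V_a\setminus\text{im}(\theta)$ equals
\[
\frac{\binom{n-m}{m-s}}{\binom{n-s}{m-s}}=\prod_{i=0}^{m-s-1}\frac{n-m-i}{n-s-i}=1-O(1/n),
\]
since $m$ and $s$ are held fixed while $n\to\infty$. Hence $P(A^{c})=o(1)$.

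Next I would observe that conditional on $A$, the two sampling models coincide: by symmetry each produces an ordered pair $(V_a,V_b)$ distributed uniformly over pairs of $m$-sets containing $\text{im}(\theta)$ and meeting in exactly $\text{im}(\theta)$. Writing $E$ for the event that $(G[V_a],\theta)$ and $(G[V_b],\theta)$ are isomorphic to $F_a$ and $F_b$ respectively, this gives $P_{\text{ind}}(E\mid A)=p(F_a,F_b,\theta;G)$, so that
\[
p(F_a,\theta;G)\,p(F_b,\theta;G)=P(A)\,p(F_a,F_b,\theta;G)+P_{\text{ind}}(E\cap A^{c}).
\]
Both $(1-P(A))\,p(F_a,F_b,\theta;G)$ and $P_{\text{ind}}(E\cap A^{c})$ are bounded by $P(A^{c})=o(1)$, yielding the lemma.

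I do not expect any serious obstacle: the whole content is a standard sampling-with-versus-without-replacement estimate. The only point to check carefully is that the binomial ratio above is $1+O(1/n)$ with an implicit constant depending only on the fixed parameters $m$ and $s$ (and not on $n$ or on the particular flags $F_a,F_b$), so that the $o(1)$ error is genuinely uniform over all admissible $\sigma$-flags and all choices of $\theta$.
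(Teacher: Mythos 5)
Your proof is correct and is essentially identical to the paper's: both condition the independent sampling model on the event $V_a\cap V_b=\mathrm{im}(\theta)$, note that conditionally the two models coincide, and bound the discrepancy by $\mathbf{P}[\bar E]=1-\binom{n-m}{m-|\sigma|}/\binom{n-|\sigma|}{m-|\sigma|}=o(1)$. No issues.
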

\begin{proof}
Choose random $m$-sets $V_a, V_b\subseteq V(G)$, independently,
subject to $\tr{im}(\theta)\subseteq V_a\cap V_b$. Let $E$ be the
event that $V_a\cap V_b= \tr{im}(\theta)$. Then
\[
p(F_a,F_b,\theta;G)\mathbf{P}[E]\leq
p(F_a,\theta;G)p(F_b,\theta;G)\leq
p(F_a,F_b,\theta;G)\mathbf{P}[E]+\mathbf{P}[\bar{E}].\] If
$|V(G)|=n$ then
\[
\mathbf{P}[E]=\frac{\binom{n-|\sigma|}{m-|\sigma|}\binom{n-m}{m-|\sigma|}}{\binom{n-|\sigma|}{m-|\sigma|}^2}=1-o(1).\]
\end{proof}


Averaging over a uniformly random choice of $\theta\in \Theta$ we have
\begin{equation}\label{a1:eq}
\mathbf{E}_{\theta\in\Theta}\left[p(F_a,\theta;G)p(F_b,\theta;G)\right]
= \mathbf{E}_{\theta\in\Theta}\left[p(F_a,F_b,\theta;G)\right]+o(1)
\end{equation}

Note that this expectation can be computed by averaging over
$l$-vertex subgraphs of $G$. For an $l$-vertex subgraph $H\in
\mc{H}$ let $\Theta_H$ be the set of all injective maps
$\theta:[|\sigma|]\to V(H)$. Recall that, for $H\in\mc{H}$, $p(H;G)$
is the probability that a random $l$-set from $V(G)$ induces a
subgraph isomorphic to $H$. Thus,
\begin{equation}\label{a2:eq}
\mathbf{E}_{\theta\in\Theta}\left[p(F_a,F_b,\theta;G)\right] =
\sum_{H\in\mc{H}}\mathbf{E}_{\theta\in\Theta_H}\left[p(F_a,F_b,\theta;H)\right]p(H;G).
\end{equation}

Consider a positive semidefinite matrix $Q=(q_{ab})$ of dimension
$|\mc{F}^\sigma_m|$. For $\theta\in \Theta$ define
$\mathbf{p}_\theta=(p(F,\theta;G):F\in \mc{F}^\sigma_m)$. Using
(\ref{a1:eq}), (\ref{a2:eq}) and linearity of expectation we have
\begin{equation}\label{q1:eq}
\mathbf{E}_{\theta\in\Theta}[\mathbf{p}_\theta^TQ\mathbf{p}_\theta]=\sum_{F_a,F_b\in \mc{F}^\sigma_m}\sum_{H\in \mc{H}}q_{ab}\mathbf{E}_{\theta\in \Theta_H}[p(F_a,F_b,\theta;H)]p(H;G)+o(1)\
\end{equation}
For $H\in \mc{H}$ define the coefficient of $p(H;G)$ in (\ref{q1:eq}) by
\begin{equation}\label{ch:eq}
c_H(\sigma,m,Q)=\sum_{F_a,F_b\in
\mc{F}^\sigma_m}q_{ab}\mathbf{E}_{\theta\in
\Theta_H}[p(F_a,F_b,\theta;H)].\end{equation} Suppose we have $t$
choices of $(\sigma_i,m_i,Q_i)$, where each $\sigma_i$ is a type,
each $m_i\leq(l+|\sigma_i|)/2$ is an integer, and each  $Q_i$ is a
positive semidefinite matrix of dimension $|\F_{m_i}^{\sigma_i}|$.
 For $H\in \mc{H}$ define
\[
c_H=\sum_{i=1}^t c_H(\sigma_i,m_i,Q_i).\]
Note that $c_H$ is independent of $G$.

Since each $Q_i$ is positive semidefinite (\ref{q1:eq}) implies that
\[
\sum_{H\in \mc{H}}c_Hp(H;G)+o(1)\geq 0.\]
Thus, using (\ref{EQ:DensityOfG}), we have
\[
d(G)\leq \sum_{H\in \mc{H}}(d(H)+c_H)p(H;G)+o(1).\]
Hence the Tur\'an density satisfies
\begin{equation}\label{newbound:eq}
\pi(\mc{F})\leq \max_{H\in \mc{H}}(d(H)+c_H).\end{equation} Since
the $c_H$ may be negative, for an appropriate choice of the
$(\sigma_i,m_i,Q_i)$, this bound may be significantly better than
the trivial averaging bound given by (\ref{EQ:DensityOfG}).

Note that we now have a semidefinite programming problem: given any
particular choice of the $(\sigma_i,m_i)$ find positive semidefinite
matrices $Q_i$ so as to minimize the bound for $\pi(\F)$ given by
(\ref{newbound:eq}).

\subsection{An example}

\begin{figure}[tbp]
\begin{center}
\includegraphics[height=4cm]{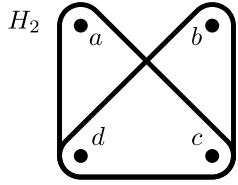}
\caption{The $3$-graph $H_2$, with vertices labelled $a,b,c,d$. Its
two edges are $acd$ and $bcd$.}\label{Fig:H2}
\end{center}
\end{figure}

We now illustrate Razborov's method with a simple example. Let
$K_4^-=\{123,124,134\}$. We will reprove De Caen's  \cite{deC} bound:
$\pi(K_4^-)\leq 1/3$.

Let $l=4$, so $\mc{H}$ consists of all $K_4^-$-free $3$-graphs of
order 4, up to isomorphism. There are three such 3-graphs which we
will refer to as $H_0, H_1,$ and $H_2$, they have $0, 1,$ and $2$
edges respectively (this is enough information to uniquely identify
them). We will use a single type: $\sigma=(G_\sigma,\theta)$ where
$V(G_\sigma)=[2]$, $E(G_\sigma)=\emptyset$ and $\theta(x)=x$. Taking $m=3$, there
are only two admissible $\sigma$-flags of order $3$ up to
isomorphism: $F_0$ and $F_1$, containing $0$ and $1$ edge
respectively.

In order to calculate the coefficients $c_H$ we need to compute
$\mathbf{E}_{\theta\in \Theta_H}[p(F_a,F_b,\theta;H)]$, for each
$H\in \{H_0,H_1,H_2\}$ and each pair $F_a,F_b\in \{F_0,F_1\}$. Their
values are given in the following table.

\begin{center}
\begin{tabular}{c|c c c}
& $H_0$ & $H_1$ & $H_2$\\
\hline
$F_0,F_0$& $1 $& $1/2$& $1/6$\\
$F_0,F_1$ & $0$ & $1/4$ & $1/3$ \\
$F_1,F_1$ & $0 $& $0$ & $1/6$\end{tabular}
\end{center}

As an example of how these values are computed consider
$\mathbf{E}_{\theta \in \Theta_{H_2}}[ p(F_0,F_1,\theta;H_2)]$. This
is the probability that a random choice of $\theta\in \Theta_{H_2}$
and $3$-sets $V_0, V_1\subset V(H_2)$ such that $V_0\cap
V_1=\text{im}(\theta)$, induce $\sigma$-flags $(H_2[V_0],\theta),
(H_2[V_1],\theta)$ that are isomorphic to $F_0, F_1$ respectively. A
random of choice of $\theta\in \Theta_{H_2}$ is equivalent to
picking a random ordered pair of vertices $(u,v)$ from $H_2$, and
setting $\theta(1)=u$ and $\theta(2)=v$. To form the random $3$-sets
$V_0, V_1$ we pick the remaining two vertices of
$V(H_2)\setminus\{u,v\}$ randomly in the order $x,y$ and set
$V_0=\{u,v,x\}, V_1=\{u,v,y\}$. The $\sigma$-flags
$(H_2[V_0],\theta), (H_2[V_1],\theta)$ are isomorphic to $F_0, F_1$
if and only if $V_0\notin E(H_2)$ and $V_1\in E(H_2)$ respectively.
Consequently $\mathbf{E}_{\theta \in \Theta_{H_2}}[
p(F_0,F_1,\theta;H_2)]$ is the probability that a random permutation
$(u,v,x,y)$ of $V(H_2)$ satisfies $\{u,v,x\}\notin E(H_2)$ and
$\{u,v,y\}\in E(H_2)$. Of the $24$ permutations of
$V(H_2)=\{a,b,c,d\}$, see Figure \ref{Fig:H2}, the following $8$
have this property:
\begin{center}
\begin{tabular}{llll}
$(a,c,b,d)$, & $(a,d,b,c)$, & $(b,c,a,d)$, & $(b,d,a,c)$,\\
$(c,a,b,d)$, & $(d,a,b,c)$, & $(c,b,a,d)$, & $(d,b,a,c)$.
\end{tabular}
\end{center}
Hence $\mathbf{E}_{\theta \in \Theta_{H_2}}[
p(F_0,F_1,\theta;H_2)]=8/24=1/3$.

We now need to find a positive semidefinite matrix \[
Q=\begin{pmatrix}
q_{00} & q_{01}\\
q_{01} & q_{11}\end{pmatrix},
\]
to minimize the bound given by (\ref{newbound:eq}). Note that
\begin{eqnarray*}
c_{H_0}&=&q_{00},\\
c_{H_1}&=&\frac{1}{2}q_{00}+\frac{1}{2}q_{01},\\
c_{H_2}&=&\frac{1}{6}q_{00}+\frac{2}{3}q_{01}+\frac{1}{6}q_{11}.\end{eqnarray*}
The bound on $\pi(K_4^-)$ given by (\ref{newbound:eq}) is now
\[
\pi(K_4^-)\leq \max\left\{q_{00},\;\;\;
\frac{q_{00}}{2}+\frac{q_{01}}{2}+\frac{1}{4},\;\;\;
\frac{q_{00}}{6}+\frac{2q_{01}}{3}+\frac{q_{11}}{6}+\frac{1}{2}\right\}.\]
This can be expressed as a semidefinite programming problem. The
solution to which is
\[
Q =\frac{1}{3}
\begin{pmatrix}
 1 & -2\\
-2 & 4
\end{pmatrix}.
\]
Consequently $\pi(K_4^-)\leq \max\{1/3,1/12 ,1/3\}= 1/3$.

\subsection{Proof of Lemma \ref{main:lem}}

To prove $\pi(\F')\leq 0.2299$, we use Razborov's flag algebras
method as outlined above. We set $l=7$, so $\mc{H}$ consists of all $7$ vertex
$3$-graphs that do not contain any $F\in \F'$, up to isomorphism.
There are $4042$ such 3-graphs, which are explicitly determined by the
C++ program \prog (this can be downloaded from \url{http://www.ucl.ac.uk/~ucahjmt/SolnFiles.zip}). To calculate the coefficients
$c_H$ we take six choices of $(\sigma_i,m_i,Q_i)$. The types are
$\sigma_i=((V_i,E_i),\theta_i)$, where
\begin{align*}
V_1 &=[1], & E_1&=\emptyset,\\
V_2 &=[3], & E_2&=\emptyset,\\
V_3 &=[3], & E_3&=\{123\},\\
V_4 &=[5], & E_4&=\{123, 124, 135\},\\
V_5 &=[5], & E_5&=\{123, 124, 345\},\\
V_6 &=[5], & E_6&=\{123, 124, 135, 245\},
\end{align*}
and $\theta_i: [|V_i|]\to V_i,$ maps $x\mapsto x$. Ideally we would use all types of size at most $l-2=5$, however this yields a computationally intractable semidefinite program. Our actual choice was made by experiment, in each case taking the value of $m_i=\lfloor(7+|\sigma_i|)/2\rfloor$. \prog determines the positive semidefinite
matrices $Q_i$ by creating a semidefinite programming problem.
Several implementations of semidefinite program solvers exist. We
chose the CSDP library \cite{B} to solve the problem. The CSDP
library uses floating point arithmetic which may introduce rounding
errors. \prog takes the output of the CSDP program and uses it to
construct the $Q_i$ (removing any rounding errors). Our results can however be  verified without needing to solve a semidefinite program:
\prog can load pre-computed matrices $Q_i$ from the file
\texttt{HypergraphsDoJump.soln} which can also be downloaded from
\url{http://www.ucl.ac.uk/~ucahjmt/SolnFiles.zip}

For each $H\in \mathcal{H}$, $d(H)$ and $c_H$ are calculated by
\prog and using (\ref{newbound:eq}) it computes that $0.2299$ is an
upper bound for $\pi(\F')$. Note that although floating point
operations are used by the semidefinite program solver, our final
computer proof consists of positive semidefinite matrices with
rational coefficients and our proof can be verified using only
integer operations, thus there is no issue of numerical accuracy.

\subsection{Other results}
The program \prog can be used to calculate upper bounds on the
Tur\'an density of other families of $3$-graphs. In particular we
have used it to reproduce Razborov's bound: $\pi(K_4^{(3)})\leq
0.561666$ \cite{R4}.

The conjectured value of $\pi(K_4^-)$ is $2/7=0.2857$. Razborov
\cite{R4} showed that $\pi(K_4^-)\leq 0.2978$. Using \prog we obtain
a new upper bound of $0.2871$ by taking $l=7$ and considering the
following four types $\sigma_i=((V_i,E_i),\theta_i)$ with the given
values of $m_i$ (in each case $\theta_i$ is the identity map):
\begin{center}
\begin{tabular}{lll}
$V_1 = [3]$, & $E_1=\emptyset$, & $m_1 = 5$,\\
$V_2 = [3]$, & $E_2=\{123\}$, & $m_2 = 5$,\\
$V_3 = [4]$, & $E_3=\{123\}$, & $m_3 = 5$,\\
$V_4 = [5]$, & $E_4=\{123,124,125\}$, & $m_4 = 6$.
\end{tabular}
\end{center}
As before the
positive semidefinite matrices $Q_i$ are determined by solving a
semidefinite programming problem.
\begin{theorem}\label{k4:thm}
Let $K_4^-$be the $3$-graph on four vertices with three edges. The
Tur\'an density of $K_4^-$ satisfies
\[
0.2857\ldots=\frac{2}{7} \leq \pi(K_4^-)\leq 0.2871.\]
\end{theorem}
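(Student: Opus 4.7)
The lower bound $2/7 \leq \pi(K_4^-)$ is classical: one verifies it by exhibiting an explicit $K_4^-$-free construction (for instance, a suitable blow-up of an oriented structure with density approaching $2/7$) and invoking the standard averaging lower bound. So the real content is the upper bound $\pi(K_4^-)\leq 0.2871$, which the plan is to obtain by a direct application of the flag-algebra machinery set up in Section~2, with parameters tuned to $K_4^-$ rather than to $\F'$.

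The plan is to take $\F=\{K_4^-\}$ and $l=7$. First I would use a computer search to enumerate, up to isomorphism, every $K_4^-$-free $3$-graph on $7$ vertices; call this set $\mc{H}$ and compute $d(H)$ for each $H\in\mc{H}$. Next, for each of the four types $\sigma_i=((V_i,E_i),\theta_i)$ listed in the statement, and the given $m_i$, I would enumerate all admissible $\sigma_i$-flags in $\F_{m_i}^{\sigma_i}$ up to isomorphism; here ``admissible'' means $K_4^-$-free on $G_F$. Then, exactly as in the $K_4^-$ worked example of Section~2.2, I would compute the tensor of expectations $\mathbf{E}_{\theta\in\Theta_H}[p(F_a,F_b,\theta;H)]$ for every $H\in\mc{H}$ and every pair $F_a,F_b\in \F_{m_i}^{\sigma_i}$ by iterating over injections $\theta\colon[|\sigma_i|]\to V(H)$ and over the ways of completing $\tr{im}(\theta)$ to a pair of $m_i$-sets in $V(H)$.

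With those tensors in hand I would form, for each candidate family of positive semidefinite matrices $(Q_i)_{i=1}^4$ of dimensions $|\F_{m_i}^{\sigma_i}|$, the coefficients $c_H(\sigma_i,m_i,Q_i)$ defined in (\ref{ch:eq}) and their sums $c_H=\sum_i c_H(\sigma_i,m_i,Q_i)$. The bound (\ref{newbound:eq}) then becomes a standard SDP: minimize $\lambda$ subject to $Q_i\succeq 0$ and $d(H)+c_H\leq \lambda$ for every $H\in\mc{H}$. I would feed this SDP to a solver such as CSDP, and the theorem is proved once a feasible point with $\lambda\leq 0.2871$ is exhibited.

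The main obstacle is not conceptual but computational and verificational. On the size side, even with the four moderate types chosen, the flag sets $\F_{m_i}^{\sigma_i}$ (with $m_i$ up to $6$) produce matrices whose combined dimension makes the tensor $\mathbf{E}_{\theta\in\Theta_H}[p(F_a,F_b,\theta;H)]$ the dominant cost, and the search for a type selection giving a bound below $0.2871$ is guided by experiment rather than theory. On the rigor side, since the solver returns floating-point matrices, the crucial final step is to round each $Q_i$ to a nearby rational positive semidefinite matrix, recompute $c_H$ exactly in rational arithmetic, and confirm that $\max_{H\in\mc{H}}(d(H)+c_H)\leq 0.2871$ exactly; this rationalization, while routine in principle, is what turns the numerical SDP output into a valid proof, and is where the \prog program plays its essential role.
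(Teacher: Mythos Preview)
Your proposal is correct and follows essentially the same approach as the paper: apply Razborov's flag-algebra framework with $l=7$ and the four specified types, solve the resulting SDP, and then certify the bound by rounding to rational positive semidefinite matrices and recomputing $\max_H(d(H)+c_H)$ exactly. The paper's account differs only in that it delegates all the enumeration, tensor computation, and verification to the \prog program rather than spelling out the steps, but the underlying argument is identical.
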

As with our main result our computations can be verified without any
floating point operations so there is no issue of numerical accuracy
in these results. Theorem \ref{k4:thm} yields a second new interval of jumps for $r=3$.
\begin{corollary}\label{d:cor}
If $\alpha\in [0.2871,8/27)$ then $\alpha$ is a jump for $r=3$.
\end{corollary}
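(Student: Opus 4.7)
The plan is to apply the Frankl--R\"odl equivalence (Theorem \ref{FR:thm}) to the single-element family $\F = \{K_4^-\}$. The corollary is essentially an immediate consequence of Theorem \ref{k4:thm}, together with the already-noted Lagrangian computation $\lambda(F_1) = 8/27$, where $F_1 = \{123,124,134\} = K_4^-$.

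First, I would recall that $\lambda(K_4^-) = 8/27$, which was stated in the discussion of $\F^* = \{F_1, F_2, F_3\}$ earlier in the introduction. (One can verify this directly by taking $x_1 = x_2 = 1/3$ and then distributing the remaining weight $1/3$ across $x_3, x_4$; the maximum of $\lambda(K_4^-,x)$ on $S_4$ is achieved at the point $x_1 = x_2 = 1/3$, $x_3 = x_4 = 1/6$, giving value $8/27$.) Next, by Theorem \ref{k4:thm}, $\pi(\{K_4^-\}) = \pi(K_4^-) \leq 0.2871$, so for every $\alpha \geq 0.2871$ the value $\alpha$ is threshold for $\{K_4^-\}$. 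For any $\alpha \in [0.2871, 8/27)$ we therefore have both
\[
\pi(\{K_4^-\}) \leq \alpha \qquad \text{and} \qquad \min_{F \in \{K_4^-\}} \lambda(F) = \lambda(K_4^-) = \frac{8}{27} > \alpha,
\]
which is precisely condition (ii) of Theorem \ref{FR:thm} for $r=3$. Thus Theorem \ref{FR:thm} yields condition (i): $\alpha$ is a jump for $r=3$.

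The only non-routine input is the bound $\pi(K_4^-) \leq 0.2871$, and that is handled by Theorem \ref{k4:thm} (proved via the flag-algebra computation with the four specified types and $l=7$). Given that theorem, there is no genuine obstacle remaining: the argument is a direct plug-in to the Frankl--R\"odl criterion, with the interval $[0.2871, 8/27)$ being exactly the set of $\alpha$ simultaneously above the Tur\'an density bound and strictly below the Lagrangian of $K_4^-$.
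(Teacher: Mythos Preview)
Your proof is correct and follows exactly the same route as the paper: apply Theorem~\ref{FR:thm} to the singleton family $\{K_4^-\}$, using $\pi(K_4^-)\leq 0.2871$ from Theorem~\ref{k4:thm} and $\lambda(K_4^-)=8/27$.

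One small slip: your parenthetical verification of $\lambda(K_4^-)=8/27$ is wrong. At $x_1=x_2=1/3$, $x_3=x_4=1/6$ one gets $6(x_1x_2x_3+x_1x_2x_4+x_1x_3x_4)=5/18$, not $8/27$. Since $K_4^-=\{123,124,134\}$ has vertex~$1$ in every edge and is symmetric in $2,3,4$, the maximizer is $x_1=1/3$, $x_2=x_3=x_4=2/9$, giving $2\cdot\tfrac{1}{3}\cdot(\tfrac{2}{3})^2=8/27$. This does not affect your argument, as you correctly rely on the value already recorded in the paper.
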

\begin{proof} Since $\lambda(K_4^-)=8/27$, this follows directly from Theorem \ref{k4:thm} and Theorem \ref{FR:thm}.\end{proof}

\subsection{Solving the semidefinite program}
Razborov's method as outlined above reduces
the problem of computing an upper bound on a Tur\'an density to
solving a semidefinite programming problem. In practice this may be
computationally difficult. Razborov \cite{R4} describes a number of
ways that this problem can be simplified so as to make the
computation more tractable. We outline one of these ideas below,
which we made use of in our work.

For a type $\sigma$ and the collection of all admissible
$\sigma$-flags of order $m$, $\F_m^\sigma$ define
$\mathbb{R}\F_m^\sigma$ to be the real vector space of formal linear
combinations of $\sigma$-flags of order $m$. Let $\mc{H}$ be the
collection of all admissible $r$-graphs of order $l$.

Let us introduce Razborov's $\llbracket\cdot\rrbracket_\sigma$
notation (which will make our expressions easier to read). Define
$\llbracket\cdot\rrbracket_\sigma:\mathbb{R}\mc{F}^\sigma_m\times\mathbb{R}\F_m^\sigma\to
\mathbb{R}^{|\mc{H}|}$,  by
\[
\llbracket F_a F_b\rrbracket_{\sigma}
=(\mathbf{E}_{\theta\in\Theta_H}[p(F_a,F_b,\theta;H)] : H\in\mc{H}),
\]
for $F_a,F_b\in \F_m^\sigma$ and extend to be bilinear.

For a positive semidefinite matrix $Q$ and $\mathbf{p}=(F :F\in
\mc{F}^\sigma_m)$, the vector of all admissible $\sigma$-flags (in an arbitrary but fixed order), we
have
\[
\llbracket \mathbf{p}^TQ\mathbf{p}\rrbracket_{\sigma} =
(c_H(\sigma,m,Q) : H\in\mc{H}),
\]
where the $c_H$ are as defined in (\ref{ch:eq}).

Razborov \cite{R4} describes a natural change of basis for $\RF$.
The important property (in terms of reducing the computational
complexity of the associated semidefinite program) is that the new
basis is of the form  $\mc{B}=\mc{B}^+\dot{\cup} \mc{B}^-$ and for
all $B^+\in \mc{B}^+$ and $B^-\in \mc{B}^-$ we have $\llbracket
B^+B^-\rrbracket_\sigma=\mathbf{0}$. Thus in our new basis the
corresponding semidefinite program has a solution $Q'$ which is a
block diagonal matrix with two blocks: of sizes $|\mc{B}^+|$ and
$|\mc{B}^-|$ respectively. Since the best algorithms for solving
semidefinite programs scale like the square of the size of block
matrices this change of basis can potentially simplify our
computation significantly.


For a type $\sigma=(G_\sigma,\theta_\sigma)$ we construct the basis
$\mc{B}$ as follows. First construct $\Gamma_\sigma$, the
automorphism group of $\sigma$, whose elements are bijective maps
$\alpha: [|\sigma|]\to [|\sigma|]$ such that
$(G_\sigma,\theta_\sigma\alpha)$ is isomorphic to $\sigma$. The
elements of $\Gamma_\sigma$ act on $\sigma$-flags in an obvious way:
for $\alpha\in \Gamma_\sigma$ and $\sigma$-flag $F=(G_F,\theta_F)$
we define $F\alpha$ to be the $\sigma$-flag $(G_F,\theta_F\alpha)$.
Define subspaces\[ \RFP =\{L\in \RF: L\alpha=L\ \forall \alpha\in
\Gamma_\sigma\}\] and
\[
\RFM=\{L\in \RF: \sum_{\alpha\in
\Gamma_\sigma}L\alpha=\mathbf{0}\}.\] Below we describe how to find
bases $\mc{B}^+,\mc{B}^-$ for these subspaces. By the construction
of these bases it will be clear that $\RF=\RFP\oplus \RFM$. Finally
we will verify that for all $B^+\in \mc{B}^+$ and $B^-\in \mc{B}^-$
we have $\llbracket B^+B^-\rrbracket_\sigma=\mathbf{0}$.

We start with the canonical basis for $\RF$ given by
$\msig=\{F_1,F_2,\ldots,F_t\}$. For each $F_i\in \msig$ define the
orbit of $F_i$ under $\Gamma_\sigma$ by
\[
F_i\Gamma_\sigma=\{F\alpha:\alpha\in \Gamma_\sigma\}.\] Any two orbits are
either equal or disjoint. Suppose there are $u$ distinct orbits:
$O_1,\ldots,O_u$. For $i\in [u]$ let $B_i^+=\sum_{F\in O_i}F$. Then
$\mc{B}^+=\{B_1^+,\ldots,B_u^+\}$ is easily seen to be a basis for
$\RFP$. Moreover if $O_i=\{F_{i_1},\ldots,F_{i_q}\}$ then
$F_{i_1}-F_{i_z}\in \RFM$ for $2\leq z \leq q$ and the union of all
such vectors forms a basis $\mc{B}^-$ for $\RFM$.

We now need to check that if $B^+\in \mc{B}^+$ and $B^-\in \mc{B}^-$
then $\llbracket B^+B^-\rrbracket_\sigma=\mathbf{0}$. If
$B^-\in\mc{B}^-$ then by construction $B^-=F_b\alpha-F_b$ for some
$F_b\in\msig$ and $\alpha\in \Gamma_\sigma$. Moreover $B^+\alpha=B^+$.
Hence, by linearity, \[ \llbracket
B^+B^-\rrbracket_{\sigma}=\llbracket
B^+(F_b\alpha-F_b)\rrbracket_{\sigma}=\llbracket
(B^+\alpha)(F_b\alpha)-B^+F_b\rrbracket_{\sigma}.\] We observe that for any $F_a\in \msig$
\begin{align*}
\llbracket (F_a\alpha)(F_b\alpha)\rrbracket_{\sigma}
&=(\mathbf{E}_{\theta\in\Theta_H}[p(F_a,F_b,\theta\alpha^{-1};H)] :
H\in\mc{H})\\
&=(\mathbf{E}_{\theta\in\Theta_H\alpha^{-1}}[p(F_a,F_b,\theta;H)] :
H\in\mc{H})
\end{align*}
where $\Theta_H\alpha^{-1}=\{\theta\alpha^{-1} :
\theta\in\Theta_H\}$. Since $\Theta_H\alpha^{-1}=\Theta_H$ we must
have $\llbracket
(F_a\alpha)(F_b\alpha)\rrbracket_{\sigma}=\llbracket F_a
F_b\rrbracket_{\sigma}$. Thus, since $B^+=F_{a_1}+F_{a_2}+\cdots+F_{a_s}$, we have $\llbracket
(B^+\alpha)(F_b\alpha)-B^+F_b\rrbracket_{\sigma}=\mathbf{0}$, and
hence $\llbracket B^+ B^-\rrbracket_{\sigma}=\mathbf{0}$.

\section{Open problems}
We have shown that $[0.2299,0.2316)$ is an interval of jumps for
$r=3$. If we were able to compute $\pi(\F')$ precisely we could
quite possibly extend this interval below 0.2299. However, as noted
in the introduction, we know that $\pi(\F')>2/9$ so our approach
could never resolve the most important open question in this area:
is $2/9$ a jump?

Indeed the question of whether $2/9$ is a jump for $r=3$ seems
remarkably difficult to resolve. If $2/9$ is a jump then the size of
this jump is very small and so to give a proof along the same lines
as the proof of Theorem \ref{main:thm} would appear to require a
very precise approximation of the Tur\'an density of some unknown
family of 3-graphs. On the other hand the only current technique for
showing a value is \emph{not} a jump is to follow the method of
Frankl and R\"odl \cite{FR}, but this trivially fails for $2/9$ (or
indeed $r!/r^r$ for any $r\geq 3$).

Another obvious open problem is to compute $\pi(K_4^-)$ exactly. It
is likely that improvements over our bound of $0.2871$ could be made
by applying Razborov's method with larger flags or by considering
different types of order $5$. Similarly improved bounds for the
central problem in this area, determining $\pi(K_4^{(3)})$, could
quite probably be found by the use of larger flags.

\section*{Acknowledgments} We would like to thank Dhruv Mubayi for pointing out that Corollary \ref{d:cor} follows from Theorem \ref{k4:thm}.

\end{document}